\newcommand{\mypar}[1]{\noindent\textbf{#1}}
\lstdefinestyle{mylistingstyle}{
	language=Python,
    backgroundcolor=\color{Ivory1},   
    basicstyle=\ttfamily,
    breakatwhitespace=false,         
    breaklines=true,                 
    captionpos=b,
    frame=single,            
    keepspaces=true,                 
    numbers=left,	
    showspaces=false,                
    showstringspaces=false,
    showtabs=false,                  
    tabsize=2,
    keywordstyle=\bfseries,
    morekeywords={None}
}
\newtheorem{theorem}{Theorem}[]
\newtheorem{lemma}[theorem]{Lemma}
\DeclareMathOperator{\ord}{ord}
\DeclareMathOperator{\modulo}{mod}
\title{Efficient Lifting of Discrete Logarithms Modulo Prime Powers}
\def\sharedfootnote{\footnotemark[\value{footnote}]\addtocounter{footnote}{-1}}
\author{
Giovanni Viglietta\thanks{University of Aizu, \texttt{\{vigliett, kachi\}@u-aizu.ac.jp}} \and
Yasuyuki Kachi\sharedfootnote
}
\date{}
\begin{document}

\maketitle

\begin{abstract}
We present a deterministic algorithm that, given a prime $p$ and a solution $x \in \mathbb Z$ to the discrete logarithm problem $a^x \equiv b \pmod p$ with $p\nmid a$, efficiently lifts it to a solution modulo $p^k$, i.e., $a^x \equiv b \pmod {p^k}$, for any fixed $k \geq 1$.

The algorithm performs $k(\lceil \log_2 p\rceil +2)+O(\log p)$ multiplications modulo $p^k$ in the worst case, improving upon prior lifting methods by at least a factor of 8.
\end{abstract}

\section{Introduction}\label{s:1}
\mypar{Summary.} In 1984, Bach~\cite{Bach84} gave an efficient deterministic algorithm for \emph{lifting} a discrete logarithm modulo a prime $p$ to a higher prime-power modulus $p^k$. In this paper, we improve on Bach's result by providing a more efficient algorithm for the same task. In \cref{s:2} we describe the algorithm itself, while in \cref{s:3} we prove its correctness, and in \cref{s:4} we analyze its performance and compare it to Bach's algorithm's. \cref{s:5} contains some concluding remarks.

\medskip
\mypar{Significance.} Our result has applications in computational number theory, where arithmetic modulo $p^k$ arises naturally in algorithms involving $p$-adic methods, local fields, and root-lifting techniques such as Hensel's lemma.

Many standard discrete logarithm algorithms, such as index calculus and its variants~\cite{Adleman1979}, are designed for finite fields and do not extend naturally to the ring~$\mathbb{Z}_{p^k}$. While generic methods like baby-step giant-step~\cite{Shanks1969} or Pollard's rho~\cite{Pollard1978} can still be applied, they do not leverage any known solution modulo~$p$ and are slower than methods that exploit the structure of~$\mathbb{Z}_{p^k}$. On the other hand, general-purpose algorithms such as Pohlig--Hellman~\cite{PohligHellman1978} rely on factoring the multiplicative group order $\varphi(p^k) = p^{k-1}(p - 1)$, which is impractical when $p$ is large and $p - 1$ is difficult to factor.

Our contribution is also relevant in certain cryptographic settings, including fault and side-channel attack models~\cite{Boneh1997,Coron1999,Kocher1999}, where computations modulo $p^k$ are used to track error propagation through arithmetic operations. It further applies to lattice-based cryptographic constructions such as NTRU and Ring-LWE~\cite{Albrecht2016,Hoffstein1998,Peikert2016}, where modular arithmetic over rings modulo a prime power plays a central role in both algorithm design and implementation.

\medskip
\mypar{Notation.} We adopt standard number-theoretic notation and terminology. $\mathbb Z$ indicates the set of all integers; $\mathbb Z^+$ denotes the positive integers, and $\mathbb N$ the non-negative integers. If $b\in \mathbb Z$ is an integer multiple of $a\in \mathbb Z$, we write $a\mid b$; otherwise, we write $a\nmid b$. The \emph{congruence} $a\equiv b\pmod m$ means that $m\mid a-b$. For a prime $p\in\mathbb N$, $\nu_p(a)$ indicates the \emph{$p$-adic valuation} of $a$, i.e., the maximum $r\in\mathbb N$ such that $p^r\mid a$; by convention, $\nu_p(0)=+\infty$. If $a$ and $m$ have no common prime divisor, they are said to be \emph{relatively prime}. If this is the case, then $\ord_m(a)$ denotes the \emph{multiplicative order} of $a$ modulo $m$, that is, the smallest positive integer $r$ such that $a^r\equiv 1\pmod m$. We will often make use of \emph{Fermat's Little Theorem}: If $p$ is a prime and $p\nmid a$, then $a^{p-1}\equiv 1\pmod p$. Equivalently, $\ord_{p}(a)\mid p-1$. We adopt the convention that $0^0=1$.

\section{The Algorithm}\label{s:2}
\mypar{Language.} Our algorithm is presented in \cref{alg:1}. Python~3.0 was chosen because its syntax is close to pseudocode and it natively supports arithmetic on arbitrarily large integers.\footnote{We remark that, in Python~3.0, the operator \texttt{a//b} returns $\lfloor a/b\rfloor$, the operator \texttt{a\%b} returns the remainder after dividing $a$ by $b$ (it is always non-negative if $b>0$), \texttt{pow(a,n)} computes $a^n$, and \texttt{pow(a,n,m)} computes $a^n$ modulo $m$.} That said, the algorithm can be straightforwardly implemented in any language that supports multiprecision integer arithmetic, such as Java (via the \texttt{BigInteger} class), C++ (using libraries such as GMP or NTL), or Mathematica.

\begin{lstlisting}[style=mylistingstyle,caption={Python 3.0 implementation of our algorithm. It assumes that $k\in \mathbb N$, $p$ is a prime, $p\nmid a$, and $a^z\equiv b \pmod{p}$. It returns $x\in \mathbb N$ such that $a^x \equiv b \pmod{p^k}$, or \texttt{None} if no such $x$ exists. \label{alg:1}},captionpos=t,float,belowcaptionskip=\bigskipamount,mathescape=true]
def discrete_log_lift(a, z, b, p, k):
    m = pow(p, k)
    z = z % (p - 1)
    a = a % m
    b = b % m
    c = pow(a, z, m)
    y = 0
    if p == 2:
        u = 2
        e = pow(a, 2, m)
    else:
        u = 1
        e = pow(a, p - 1, m)
    h = e - 1
    r = 0
    while h % p == 0 and r < k:
        h = h // p
        r = r + 1
    v = pow(p, r)
    if (b - c) % v != 0:
        if p > 2 or (b - a) % v != 0:
            return None
        c = a
        y = 1
    g = pow(h * b, p - 2, p)
    while b != c:
        d = (g * ((b - c) // v)) % p
        y = y + d * u
        if r + r >= k:
            f = (1 + (e - 1) * d) % m
            e = (1 + (e - 1) * p) % m
        else:
            f = pow(e, d, m)
            e = (f * pow(e, p - d, m)) % m
        c = (c * f) % m
        u = u * p
        v = v * p
        r = r + 1
    return (p - 1) * y + z
\end{lstlisting}

\medskip
\mypar{Overview.} The algorithm assumes that $a^z\equiv b\pmod p$ and returns $x$ such that $a^x\equiv b\pmod{p^k}$ (or reports that no such $x$ exists), provided that $p\nmid a$.\footnote{The case $p\mid a$ is of little interest, as the minimum solution $x$ can easily be found without knowledge of $z$, either via an exhaustive search, since necessarily $0\leq x\leq k$, or by solving the linear equation $\nu_p(a)x= \nu_p(b)$, after ruling out some trivial cases. The details are left to the reader.}

The final solution $x$ is expressed in the form $x=(p-1)y+z$ (Line~39), where $z$ is given, and $y$ is computed incrementally, one digit in base $p$ at a time. Specifically, the algorithm computes a sequence $(y_j)_{j\geq 0}$ such that $y_j<p^j$, that is, $y_j$ has $j$ digits in base $p$. In particular, $y_0=0$ (Line~7) and $y_{j+1} = y_j + d_jp^{j}$ with $0\leq d_j<p$ (Line~28). That is, $y_{j+1}$ is obtained by appending a new digit $d_j$ to the left of the base-$p$ expression of $y_j$.

Accordingly, we can define $x_j = (p-1)y_j+z$ and $c_j=a^{x_j} \modulo p^k$ (Lines~6, 29--35). As we will prove in \cref{s:3}, if the digit $d_j$ is chosen appropriately (Lines~25 and~27), then $c_{j+1}$ is a ``better approximation'' of $b$ than $c_j$, in some sense. Indeed, we can immediately verify that
$$c_0\equiv a^{x_0}\equiv a^z\equiv b \pmod p$$
and therefore $c_0$ and $b$ coincide at least on their first digit in base $p$. Then, as the computation progresses, each new $c_j$ matches $b$ on one additional digit. More specifically, if we define
$$r=\nu_p\left(a^{p-1}-1\right)$$
(Lines~13--18), then
$$c_j\equiv b\pmod{p^{r+j}}$$
Thus, by the time $j=k-r$, the numbers $c_j$ and $b$ are equal modulo $p^k$, and $x_j$ is the final solution.

\medskip
\mypar{Algorithmic details.} We will now flesh out our algorithm, providing additional details and discussing the cases where the solution $x$ does not exist. For convenience, we will make the assumption that $p$ is an odd prime; the special case $p=2$ is slightly different and will be described later. The proof of correctness is in \cref{s:3} and a performance analysis is in \cref{s:4}.

The algorithm performs preliminary computations up to Line~25, and the main loop starts at Line~26. The goal of the main loop is to compute the digits of $y$ until $a^{(p-1)y+z}\equiv b\pmod{p^k}$.
\begin{itemize}
\item {\bf Line~1:} The input to the algorithm are the integers $a$, $z$, $b$, $p$, $k$ such that $p$ is a prime, $p\nmid a$, $a^z\equiv b\pmod p$, and $k\geq 0$. The output will be an integer $x\geq 0$ such that $a^x\equiv b\pmod{p^k}$.
\item {\bf Line~2:} We compute $m=p^k$ as the modulus for our subsequent operations.
\item {\bf Lines~3, 4:} We ensure that $0\leq a,b<p^k$. This is a harmless normalization step, since we are working modulo $p^k$.
\item {\bf Line~5:} We ensure that $0\leq z<p-1$. Again, this normalization step is possible because, due to Fermat's Little Theorem, $\ord_p(a)\mid p-1$.
\item {\bf Lines~6, 7:} We set $y_0=0$ and $c_0=a^{x_0} \modulo p^k$, where (implicitly) $x_0=(p-1)y_0+z=z$.
\item {\bf Line~12:} We set $u_0=1$. This is simply a helper variable that is used to set the next digit of $y$ (Line~28) and is multiplied by $p$ at every iteration of the main loop (Line~36).
\item {\bf Line~13:} We set $e_0 = a^{p-1}\modulo p^k$. This is another helper variable that is used to update $c$ (Lines~33 and~35) and is raised to the $p$th power at every iteration of the main loop (Line~34).
\item {\bf Lines~14--18:} We set $r=\min\{\nu_p\left(a^{p-1}-1\right),k\}$ and $h=\frac{a^{p-1}-1}{p^r}$. Note that $h$ is an integer and $p\nmid h$, by definition of $\nu_p$. Bounding $r$ with $k$ serves the only purpose of preventing an infinite loop if $a^{p-1}\equiv 1\pmod{p^k}$.
\item {\bf Line~19:} We set $v_0 = p^r$. This helper variable is used to select the next digit of $c$ to be modified to match $b$ (Line~27), and is multiplied by $p$ at every iteration of the main loop (Line~37).
\item {\bf Lines~20--22:} Since the next digit of $c$ to be modified will be the $(r+1)$th, then the $(r+2)$th, etc., it follows that the first $r$ digits of $c$ will never change. Thus, if $c_0\not\equiv b\pmod{p^r}$, there can be no $j$ such that $c_j\equiv b\pmod{p^k}$. In this case, we can immediately report that the problem has no solution (this step will be fully justified in \cref{s:3}). Otherwise, since the first $r$ digits of $c$ are correct and all subsequent digits can be modified at will, the problem has a solution, which will be computed in the main loop.
\item {\bf Line~25--28:} We preliminarily compute $g\equiv (hb)^{p-2}\pmod p$. Note that, due to Fermat's Little Theorem, this is tantamount to computing $(hb)^{-1}$, provided that $p\nmid hb$. This number $g$ is used in the main loop to compute each new digit of $y$. Specifically, at the $(j+1)$th iteration of the loop, with $j\geq 0$, we extract the $(r+j+1)$th digit of $b$ and the $(r+j+1)$th digit of $c_{j}$, and we multiply their difference by $g \pmod p$ (Line~27). The result is $d_{j}$, which is the new $(j+1)$th digit of $y$. Thus, $y_{j+1} = y_{j} + d_{j}p^{j}$ (Line~28). This choice for $d_{j}$ guarantees that $c_{j+1}=a^{(p-1)y_{j+1}+z}$ matches $b$ on the first $r+j+1$ digits (this claim will be fully justified in \cref{s:3}).\footnote{This operation on $c$ does not only alter its $(r+j+1)$th digit, but possibly also its higher-order digits. However, these digits will be inspected and corrected in the next iterations of the main loop.}
\item {\bf Lines~29--31, 38:} This is an optimization that roughly cuts in half the number of exponentiations performed by the algorithm. Since this step is not essential, we can ignore it for now and postpone its discussion to \cref{s:4}.
\item {\bf Lines~33--35:} We compute $c_{j+1}\equiv a^{(p-1)y_{j+1}+z}\equiv c_{j}\cdot a^{(p-1)d_{j}p^{j}} \equiv c_{j}\cdot e_{j}^{d_{j}}\pmod{p^k}$. This is done by computing $f_{j}=e_{j}^{d_{j}}\modulo p^k$. Also, $e_{j+1}=e_{j}^{p}\modulo p^k$. As we will argue in \cref{s:4}, \emph{this is the bottleneck of the algorithm}. In fact, as an optimization, we compute $e_{j+1}$ as $f_{j}\cdot e_{j}^{p-d_{j}}\modulo p^k$, to reduce the exponent. Further optimizations of this critical step will be discussed in \cref{s:4}.
\item {\bf Lines~36, 37:} We compute $u_{j+1}=pu_{j}$ (which will be used to modify the next digit of $y$) and $v_{j+1}=pv_{j}$ (which will be used to select the next digit of $c$ and $b$).
\item {\bf Line~39:} When $b$ and $c_j$ are equal modulo $p^k$ (which happens after at most $k-r$ iterations of the main loop) the algorithm terminates and returns $x=x_j=(p-1)y_j+z$, which is guaranteed to satisfy $a^x\equiv a^{(p-1)y_j+z}\equiv c_j\equiv b\pmod{p^k}$, by construction.
\end{itemize}

\mypar{Special case $p=2$.}
The algorithm as described above only works if $p$ is an odd prime. If $p=2$, we necessarily have $z=0$, and therefore the equation $x_j=(p-1)y_j+z$ simplifies to $x_j=y_j$. As it turns out, however, our general setup fails when applied to computing the first binary digit of $y$; this point will be clarified in \cref{s:3}.\footnote{Incidentally, this is related to the well-known fact that there are primitive roots modulo $p^k$ for every odd prime $p$, but there are no primitive roots modulo $2^k$ for $k\geq 3$. That is, the standard proof by induction that there are multiplicative generators modulo $p^k$, due to Gauss, fails in the case $p=2$ for precisely the same reason why our \cref{l:2} cannot be generalized to $p=2$.} Thus, the first digit of $y$ has to be set ``manually'', depending on the first $r$ digits of $b$.

Fortunately, things start behaving as expected with the second binary digit of $y$. That is, adjusting the $(j+2)$th digit of $y$ (as opposed to the $(j+1)$th), with $j\geq 0$, allows us to modify the $(r+j+1)$th digit of $c=a^y$ at will (with the side effect of possibly altering its higher-order digits, as well) while leaving the previous digits of $c$ unchanged.

So, the main loop of the algorithm for the case $p=2$ is exactly the same as the $p>2$ version; the only differences are in the preliminary computations, as follows.
\begin{itemize}
\item {\bf Lines~8--10:} Since the first binary digit of $y$ will be chosen ``manually'', the main loop should start with its second digit. Accordingly, $u_0=p^1=2$ and $e_0\equiv a^{(p-1)p^1}\equiv a^2 \pmod{p^k}$. Note that, with these definitions, $r=\min\{\nu_2\left(a^2-1\right),k\}$ and $h=\frac{a^2-1}{2^r}$ (Lines~14--18).
\item {\bf Lines~20--24:} We choose the first binary digit of $y$, or we report that both choices are incompatible with $b$. Specifically, if $y_0=d_0=0$, we have $c_0=a^0=1$; and if $y_0=d_0=1$, we have $c_0=a^1=a$. If neither of these numbers matches the first $r$ digits of $b$, we report that the problem has no solution (Line~22). Otherwise, if $y_0=0$ is correct we simply proceed with the main loop (Line~20); if $y_0=1$ is correct we update the variables $y$ and $c$ accordingly before proceeding (Lines~23, 24). These steps will be fully justified in \cref{s:3}.
\end{itemize}

\section{Proof of Correctness}\label{s:3}
In this section we set out to justify all the claims we made in \cref{s:2} about the correctness of our algorithm. We first introduce three elementary lemmas. Note that \cref{l:1} is a well-known fact in number theory, but its proof has been included for self-containment.

\begin{lemma}\label{l:1}
Let $p$ be a prime, $j\in \mathbb Z^+$, and $x \in \mathbb{Z}$ such that $p\nmid x$. Then
$$\ord_{p^{j+1}}(x) \in \left\{ \ord_{p^j}(x),\ p \cdot \ord_{p^j}(x) \right\}$$
\end{lemma}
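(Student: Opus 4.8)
\textbf{Proof proposal for \cref{l:1}.} The plan is to set $d=\ord_{p^j}(x)$ and $D=\ord_{p^{j+1}}(x)$ and show that $d\mid D$ and $D\mid pd$; since $p$ is prime, the only divisors of $p$ are $1$ and $p$, so $D/d\in\{1,p\}$, which is exactly the claim. Note that $\ord_{p^{j+1}}(x)$ is well defined because $p\nmid x$ implies $\gcd(x,p^{j+1})=1$.

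For the first divisibility, I would argue that $x^D\equiv 1\pmod{p^{j+1}}$ forces $x^D\equiv 1\pmod{p^{j}}$ (reducing the congruence modulo $p^j$), and therefore $\ord_{p^j}(x)=d$ divides $D$ by the standard minimality property of the multiplicative order. For the second, I would start from $x^d\equiv 1\pmod{p^j}$, write $x^d=1+tp^j$ for some $t\in\mathbb Z$, and raise both sides to the $p$th power:
$$x^{pd}=(1+tp^j)^p=1+\binom{p}{1}tp^j+\binom{p}{2}t^2p^{2j}+\dots+t^pp^{pj}.$$
The term $\binom{p}{1}tp^j=tp^{j+1}$ is divisible by $p^{j+1}$, and every later term is divisible by $p^{2j}$, hence by $p^{j+1}$ since $j\geq 1$. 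Thus $x^{pd}\equiv 1\pmod{p^{j+1}}$, which gives $D\mid pd$.

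Combining, $d\mid D\mid pd$, so $D=d$ or $D=pd$, completing the proof.

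I do not expect a genuine obstacle here; the only point requiring minor care is the divisibility bookkeeping in the binomial expansion, in particular checking that $2j\geq j+1$ holds for all $j\in\mathbb Z^+$ (it does, with equality only at $j=1$), so that the argument goes through uniformly, including the case $p=2$.
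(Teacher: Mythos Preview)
Your proposal is correct and follows essentially the same argument as the paper: both set $d=\ord_{p^j}(x)$, $D=\ord_{p^{j+1}}(x)$, deduce $d\mid D$ by reducing the congruence $x^D\equiv 1\pmod{p^{j+1}}$ modulo $p^j$, deduce $D\mid pd$ by writing $x^d=1+tp^j$ and expanding $(1+tp^j)^p$ via the binomial theorem (using $2j\geq j+1$), and conclude from the primality of $p$.
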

\begin{proof}
Let $d_j=\ord_{p^j}(x)$ for $j\in \mathbb Z^+$. By definition of $d_{j+1}$, we have 
$$x^{d_{j+1}}\equiv 1 \pmod{p^{j+1}}$$
and in particular
$$x^{d_{j+1}}\equiv 1 \pmod{p^{j}}$$
which implies that $d_j\mid d_{j+1}$. Now, since
$$x^{d_{j}}\equiv 1 \pmod{p^{j}}$$
there exists an integer $m$ such that $x^{d_j} = 1 + mp^j$. Thus,
$$x^{pd_j}= \left(x^{d_j}\right)^p= \left(1+mp^j\right)^p= \sum_{i=0}^p{p\choose i}m^ip^{ij}= 1 +{p\choose 1}mp^{j} + np^{2j}= 1 +mp^{j+1} + np^{2j}$$
for some integer $n$. Since $j\geq 1$, the last term in the previous sum vanishes modulo $p^{j+1}$. Hence,
$$x^{pd_j}\equiv 1 +mp^{j+1} + np^{2j} \equiv 1 \pmod{p^{j+1}}$$
We conclude that $d_j\mid d_{j+1}\mid pd_j$, and by the primality of $p$ either $d_{j+1}=d_j$ or $d_{j+1}=pd_j$.
\end{proof}

\begin{lemma}\label{l:2}
Let $p$ be an odd prime, $x \in \mathbb{Z}$, and $r\in \mathbb N$ such that $1\leq r\leq \nu_p(x-1)$. Then, for any $j\in \mathbb N$,
\begin{equation}\label{eq:a1}
x^{p^j} \equiv 1 + hp^{r+j} \pmod{p^{r+j+1}}
\end{equation}
where $h=\frac{x-1}{p^r}\mod p$.
\end{lemma}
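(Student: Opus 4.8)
The plan is to induct on $j$, carrying through the induction a slightly stronger statement than \eqref{eq:a1} itself: that $x^{p^j} = 1 + m_j\, p^{r+j}$ for some integer $m_j$ with $m_j \equiv h \pmod{p}$. This ``with explicit remainder'' form is what makes the inductive step go through cleanly, since it lets me apply the binomial theorem to the next power.

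For the base case $j = 0$, the hypothesis $r \le \nu_p(x-1)$ guarantees that $m_0 := \frac{x-1}{p^r}$ is an integer, so $x = 1 + m_0 p^r$, and $m_0 \equiv h \pmod p$ holds by the very definition of $h$. For the inductive step, assuming $x^{p^j} = 1 + m\,p^{r+j}$ with $m \equiv h \pmod p$, I would raise to the $p$th power and expand:
$$x^{p^{j+1}} = \bigl(1 + m\,p^{r+j}\bigr)^p = 1 + m\,p^{r+j+1} + \sum_{i=2}^{p} \binom{p}{i} m^i p^{i(r+j)},$$
and then show the entire tail sum is divisible by $p^{r+j+2}$. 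For $2 \le i \le p-1$ this follows because $p \mid \binom{p}{i}$, so the $i$th term has $p$-adic valuation at least $1 + i(r+j) \ge 1 + 2(r+j)$, which is $\ge r+j+2$ since $r \ge 1$; for $i = p$ the term is $m^p p^{p(r+j)}$ and $p(r+j) \ge 3(r+j) \ge r+j+2$ for the same reason. Reducing modulo $p^{r+j+2}$ then leaves $x^{p^{j+1}} \equiv 1 + m\,p^{r+j+1}$, and since $m \equiv h \pmod p$ this equals $1 + h\,p^{r+j+1}$ modulo $p^{r+j+2}$; writing $x^{p^{j+1}} = 1 + m_{j+1} p^{r+j+1}$ gives $m_{j+1} \equiv h \pmod p$ and closes the induction, while restricting the congruence to modulus $p^{r+j+2}$ recovers \eqref{eq:a1} for $j+1$.

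I expect the only real subtlety to be the $i = 2$ term, $\binom{p}{2} m^2 p^{2(r+j)} = \tfrac{p-1}{2}\, m^2 p^{2(r+j)+1}$: its valuation is only $2(r+j)+1$, and keeping this at least $r+j+2$ requires $r+j \ge 1$, which is exactly why the hypothesis $r \ge 1$ is needed and cannot be relaxed to $r \ge 0$. This same term is where the oddness of $p$ is essential — it is what lets us write $\binom{p}{2} = p\cdot\tfrac{p-1}{2}$ with an integer second factor, i.e.\ what makes $p \mid \binom{p}{2}$. For $p = 2$ that term has valuation only $2(r+j)$, which falls below $r+j+2$ when $r+j = 1$, so it corrupts the coefficient of $p^{r+j+1}$; this is precisely the obstruction the paper refers to in its discussion of the case $p=2$. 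Apart from this point, the argument is a routine bookkeeping of $p$-adic valuations in the binomial expansion.
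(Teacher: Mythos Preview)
Your proof is correct and essentially identical to the paper's: both induct on $j$, expand $(1+\text{something}\cdot p^{r+j})^p$ via the binomial theorem, and bound the $p$-adic valuation of each tail term using $p\mid\binom{p}{i}$ for $1<i<p$ together with $p(r+j)\ge 3(r+j)$ for the top term. The only difference is cosmetic packaging of the inductive hypothesis---you write $x^{p^j}=1+m_j p^{r+j}$ with $m_j\equiv h\pmod p$, while the paper writes the equivalent $x^{p^j}=1+hp^{r+j}+mp^{r+j+1}$---and your closing remarks on where $r\ge1$ and $p$ odd are used match the paper's analysis exactly.
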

\begin{proof}
We will prove \cref{eq:a1} by induction on $j\geq 0$. For $j=0$, \cref{eq:a1} becomes
$$x \equiv 1 + hp^{r} \pmod{p^{r+1}}$$
which holds by definition of $h$. Assuming \cref{eq:a1} to hold for some $j\geq 0$, let us prove it for $j+1$ in lieu of $j$. By assumption, $x^{p^j}=1+hp^{r+j}+mp^{r+j+1}$ for some integer $m$. Thus,
\begin{equation}\label{eq:a2}
x^{p^{j+1}} = \left(1+hp^{r+j}+mp^{r+j+1}\right)^p = \left(1+p^{r+j}(h+mp)\right)^p = \sum_{i=0}^p {p\choose i}p^{i(r+j)}(h+mp)^i
\end{equation}
Observe that the term ${p\choose i}p^{i(r+j)}$ is a multiple of $p^{r+j+2}$ for all $1< i< p$. Indeed,
$${p\choose i}=\frac{p(p-1)\cdots(p-i+1)}{i(i-1)\cdots 2\cdot 1}$$
is a multiple of $p$, because the prime factor $p$ in the numerator is not canceled by any factor in the denominator, since $i<p$. Therefore,
$$\nu_p\left({p\choose i} p^{i(r+j)} \right) = i(r+j)+1\geq 2(r+j)+1\geq r+j+2$$
which holds because $r\geq 1$ and $j\geq 0$. Moreover, for $i=p$, since $p$ is an odd prime, we have
$$\nu_p\left({p\choose p}p^{p(r+j)}\right) = p(r+j) \geq 3(r+j) \geq r+j+2$$
Thus, every term of the summation in \cref{eq:a2} vanishes modulo $p^{r+j+2}$ except possibly for $i=0$ and $i=1$. It follows that \cref{eq:a1} is verified for $j+1$:
$$x^{p^{j+1}} \equiv 1 + {p\choose 1}p^{r+j}(h+mp) \equiv 1 + hp^{r+j+1} + mp^{r+j+2} \equiv 1 + hp^{r+j+1} \pmod{p^{r+j+2}}$$
which concludes our proof by induction.
\end{proof}

\begin{lemma}\label{l:3}
Let $x \in \mathbb{Z}$ and $r\in \mathbb N$ such that $1\leq r\leq \nu_2(x^2-1)$. Then, for any $j\in \mathbb N$,
\begin{equation}\label{eq:b1}
x^{2^{j+1}} \equiv 1 + h2^{r+j} \pmod{2^{r+j+1}}
\end{equation}
where $h=\frac{x^2-1}{2^r}\mod 2$.
\end{lemma}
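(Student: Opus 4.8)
Since $x^{2^{j+1}} = (x^2)^{2^j}$, it is natural to set $y = x^2$: then the hypothesis becomes $1 \le r \le \nu_2(y-1)$ with $h = \frac{y-1}{2^r}\bmod 2$, and the claim is exactly \cref{eq:a1} of \cref{l:2} with the prime $2$ in place of the odd prime $p$ and $y$ in place of $x$, namely $y^{2^j} \equiv 1 + h2^{r+j}\pmod{2^{r+j+1}}$ for all $j \in \mathbb N$. So the plan is to re-run the induction on $j$ from the proof of \cref{l:2} and pinpoint where $p=2$ causes trouble.

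The base case $j=0$ is just the definition of $h$. For the inductive step, write $y^{2^j} = 1 + 2^{r+j}(h+2m)$ with $m \in \mathbb Z$ and square: $y^{2^{j+1}} = 1 + 2^{r+j+1}(h+2m) + 2^{2(r+j)}(h+2m)^2$. The middle term is $\equiv h2^{r+j+1}\pmod{2^{r+j+2}}$, and the last term has $2$-adic valuation at least $2(r+j)$, so it is divisible by $2^{r+j+2}$ exactly when $r+j\ge 2$; note that the intermediate binomial coefficients $\binom{p}{i}$ with $1<i<p$ that had to be bounded in \cref{l:2} are simply absent for $p=2$. Hence the induction goes through verbatim as long as $r\ge 2$, which settles that case.

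The only problematic case is $r=1$ — equivalently, the single step from $j=0$ to $j=1$ — and this is precisely why the lemma is phrased in terms of $x^2$ rather than $x$. Here I would first observe that $h=0$ is forced: $r\ge 1$ gives $2\mid x^2-1$, so $x^2$ is odd, hence $x^2\equiv 1\pmod 4$ (a square is never $\equiv 3\pmod 4$), i.e. $4\mid x^2-1$, so $h=\frac{x^2-1}{2}\bmod 2=0$. With $h=0$ the claim collapses to $y^{2^j}\equiv 1\pmod{2^{j+2}}$, which follows by an elementary induction: the case $j=0$ is $y\equiv 1\pmod 4$, and from $y^{2^j}=1+2^{j+2}n$ one gets $y^{2^{j+1}} = 1+2^{j+3}n+2^{2j+4}n^2\equiv 1\pmod{2^{j+3}}$.

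The main obstacle is conceptual rather than computational: one must notice that the analogue of \cref{l:2}'s argument breaks only at $r=1$, and that in that lone case the leading digit $h$ is necessarily $0$, so the statement degenerates to $y^{2^j}\equiv 1$ and can be handled by hand.
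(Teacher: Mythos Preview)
Your proof is correct and follows essentially the same approach as the paper: induction on $j$ via squaring, with the observation that the step fails only when $r+j<2$, i.e., at $r=1$, $j=0$, where one shows $h=0$ because odd squares are $\equiv 1\pmod 4$. The only cosmetic difference is that the paper handles just the single problematic step $j=0\to j=1$ (using $x=1+2m$ and $x^2\equiv 1\pmod 8$) and then lets the main induction resume, whereas you run a separate clean induction for the whole $r=1$ case; both are fine.
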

\begin{proof}
We will prove \cref{eq:b1} by induction on $j\geq 0$. For $j=0$, \cref{eq:b1} becomes
$$x^2 \equiv 1 + h2^{r} \pmod{2^{r+1}}$$
which holds by definition of $h$. Assuming \cref{eq:b1} to hold for some $j\geq 0$, let us prove it for $j+1$ in lieu of $j$. By assumption, $x^{2^{j+1}}=1+h2^{r+j}+m2^{r+j+1}$ for some integer $m$. Thus,
$$
x^{2^{j+2}} = \left(1+h2^{r+j}+m2^{r+j+1}\right)^2 = \left(1+2^{r+j}(h+2m)\right)^2 = 1 + 2^{r+j+1}(h+2m) + 2^{2(r+j)}(h+2m)^2
$$
Observe that the last term in the sum vanishes modulo $2^{r+j+2}$, provided that $r+j\geq 2$. If this is the case, we obtain \cref{eq:b1} for $j+1$, as desired:
$$
x^{2^{j+2}} \equiv 1 + 2^{r+j+1}(h+2m) \equiv 1 + h2^{r+j+1}+m2^{r+j+2} \equiv 1 + h2^{r+j+1} \pmod{2^{r+j+2}}
$$

Since $r\geq 1$, the only case where $r+j\geq 2$ does not hold is when $j=0$ and $r=1$. In this special case, \cref{eq:b1} becomes
$$x^2\equiv 1+2h \pmod 4$$
which holds by assumption for some $h\in \{0,1\}$. Thus $x^2$ is odd, implying that $x$ is also odd. If we write $x=1+2m$, we obtain
$$1+2h\equiv x^2 \equiv (1+2m)^2 \equiv 1+4m + 4m^2 \equiv 1+4m(m+1)\equiv 1 \pmod 4$$
and hence $h=0$. Moreover, since either $m$ or $m+1$ is an even number, we have
$$x^2\equiv 1+ 4m(m+1)\equiv 1\pmod 8$$
We conclude that \cref{eq:b1} is true for $j+1$, as well (recall that $j=0$, $r=1$, and $h=0$):
$$x^{2^{j+2}}\equiv x^{2^2}\equiv \left(x^2\right)^2 \equiv 1^2 \equiv 1 \equiv 1 + h2^{r+j+1} \pmod{2^{r+j+2}=8}$$
Thus, \cref{eq:b1} is verified for $j+1$ in every case, which concludes our proof by induction.
\end{proof}


We are finally ready to prove that our algorithm is correct.

\begin{theorem}\label{t:1}
The algorithm in \cref{alg:1} is correct. That is, if $p$ is a prime, $p\nmid a$, $a^z\equiv b\pmod p$, and $k\geq 0$, it outputs an integer $x\geq 0$ such that $a^x\equiv b\pmod{p^k}$ or \texttt{None} if no such $x$ exists.
\end{theorem}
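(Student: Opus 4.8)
The plan is to prove, by induction on the main-loop counter $j$, a single invariant that pins down the meaning of every variable, and then to treat the two \texttt{return None} branches separately. Throughout I assume $k\geq 1$; the case $k=0$ is immediate, since then $m=1$, the initial test is vacuous, the loop does not execute, and the algorithm returns $z\geq 0$ with $a^{z}\equiv b\pmod 1$. Let $r_0$ denote the value of $r$ just after Line~18, so $r_0=\min\{\nu_p(a^{p-1}-1),\,k\}$ when $p$ is odd and $r_0=\min\{\nu_2(a^2-1),\,k\}$ when $p=2$; by Fermat's Little Theorem $1\leq r_0\leq k$. At the start of the $j$-th iteration the loop-variables satisfy $r=\rho_j:=r_0+j$, $u=p^{j}$, $v=p^{\rho_j}$, and I claim that, as long as the loop has not terminated,
\[
c=c_j\equiv a^{x_j}\pmod{p^k},\qquad e=e_j\equiv a^{(p-1)p^{j}}\ (\text{resp.}\ a^{2^{j+1}})\pmod{p^k},\qquad c_j\equiv b\pmod{p^{\rho_j}},
\]
where $x_j=(p-1)y_j+z$ and $0\leq y_j<p^{j}$ (resp.\ $y_j<2^{j+1}$). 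Because $c_j,b\in[0,p^k)$, the congruence $c_j\equiv b\pmod{p^{\rho_j}}$ together with $c_j\neq b$ forces $\rho_j<k$; hence $r_0$ is uncapped (so $p\nmid h$), and \cref{l:2} (resp.\ \cref{l:3}) applies and yields $e_j\equiv 1+hp^{\rho_j}\pmod{p^{\rho_j+1}}$ together with $p^{\rho_j}\mid e_j-1$.

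For the inductive step I would first expand $e_j^{d}\equiv(1+hp^{\rho_j})^{d}\equiv 1+dhp^{\rho_j}\pmod{p^{\rho_j+1}}$ (the binomial tail vanishes since $i\rho_j\geq 2\rho_j\geq\rho_j+1$ for $i\geq 2$). From $c_{j+1}\equiv c_j e_j^{d}\equiv c_j+c_j dh\,p^{\rho_j}\pmod{p^{\rho_j+1}}$ and $b-c_j\equiv 0\pmod{p^{\rho_j}}$ one gets $c_{j+1}\equiv b\pmod{p^{\rho_j+1}}$ precisely when $d\equiv(c_j h)^{-1}\cdot\frac{b-c_j}{p^{\rho_j}}\pmod p$. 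Since every update multiplies $c$ by $e_j^{d}\equiv 1\pmod p$, the lowest digit of $c$ never changes, so $c_j\equiv b\pmod p$ and $(c_j h)^{-1}\equiv(bh)^{-1}\equiv g\pmod p$ — which is exactly the digit $d=d_j$ chosen on Lines~25 and~27 (inside the loop both $b$ and $h$ are prime to $p$, so $g$ is a genuine inverse modulo $p$, degenerating to $g=1$ when $p=2$). The remaining identities $x_{j+1}=x_j+(p-1)d_jp^{j}$, $c_{j+1}\equiv a^{x_{j+1}}$, $e_{j+1}\equiv e_j^{p}\equiv a^{(p-1)p^{j+1}}$ then follow mechanically, and one checks that both branches of Lines~31--35 compute $f_j\equiv e_j^{d_j}$ and $e_{j+1}\equiv e_j^{p}$ modulo $p^k$ — the fast branch being valid because $\nu_p(e_j-1)\geq\rho_j$ and $2\rho_j\geq k$ make $(e_j-1)^2\equiv 0\pmod{p^k}$. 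Since $\rho_{j+1}=\rho_j+1$, after at most $k-r_0$ iterations we reach $\rho_j=k$, which forces $c_j=b$, the loop exits, and Line~39 returns $x_j=(p-1)y_j+z\geq 0$ with $a^{x_j}\equiv c_j\equiv b\pmod{p^k}$. The base case $j=0$ is where the two versions diverge: for odd $p$ it is simply $c_0=a^z$, $y_0=0$, while for $p=2$ the first bit of $y$ is fixed ``by hand'' on Lines~20--24 so that $x_0=y_0\in\{0,1\}$, $c_0\in\{1,a\}$, and $c_0\equiv b\pmod{2^{r_0}}$.

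For the \texttt{return None} branches I would argue the contrapositive: if some $x^\ast$ satisfies $a^{x^\ast}\equiv b\pmod{p^k}$, then the initial test passes. Since $p^{r_0}\mid a^{p-1}-1$ (resp.\ $2^{r_0}\mid a^2-1$), we have $\ord_{p^{r_0}}(a)\mid p-1$; iterating \cref{l:1} up from modulus $p$ gives $\ord_{p^{r_0}}(a)=\ord_p(a)\cdot p^{t}$ for some $t\geq 0$, so $p^{t}\mid p-1$ forces $t=0$ and $\ord_{p^{r_0}}(a)=\ord_p(a)$. From $a^{x^\ast}\equiv b\equiv a^z\pmod p$ we get $\ord_p(a)\mid x^\ast-z$, hence $a^{x^\ast}\equiv a^z\pmod{p^{r_0}}$, i.e.\ $b\equiv c_0\pmod{p^{r_0}}$ — exactly the test on Line~20 (for $p=2$, $\ord_{2^{r_0}}(a)\in\{1,2\}$ instead yields $b\equiv 1$ or $b\equiv a\pmod{2^{r_0}}$, matching Lines~20--24). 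Conversely, whenever the test passes the loop above actually constructs a solution; so the algorithm returns \texttt{None} precisely when no solution exists.

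I expect the real difficulty to be bookkeeping rather than a single hard idea: carefully reconciling the capped value $r_0=\min\{\nu_p(\cdot),k\}$ with the hypotheses of \cref{l:2,l:3} so that one never divides by a multiple of $p$ nor steps outside the range where those congruences hold; getting the $p=2$ base case right, since the loop is index-shifted there ($u_0=2$, $e_0=a^2$, not $a^{p-1}$) for precisely the reason \cref{l:2} fails and \cref{l:3} is needed; verifying that $d_j=0$ cannot stall the loop (it cannot, since $d_j=0$ already means $c_j\equiv b\pmod{p^{\rho_j+1}}$, so the invariant only strengthens while $\rho_j$ grows); and dispatching the tiny cases $k\in\{0,1\}$ (and $k\leq 2$ when $p=2$) by direct inspection.
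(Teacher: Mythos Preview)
Your proposal is correct and follows essentially the same approach as the paper: an induction on the main-loop iterations driven by \cref{l:2} (resp.\ \cref{l:3}) to control $e_j$ modulo $p^{\rho_j+1}$, together with \cref{l:1} to justify the \texttt{None} branch via $\ord_{p^{r_0}}(a)=\ord_p(a)$. Your packaging as a single loop invariant, and your shortcut $(c_jh)^{-1}\equiv(bh)^{-1}\equiv g\pmod p$ from $c_j\equiv b\pmod p$, are slightly tidier than the paper's computation of $c^{(d+1)}_{j+1}-c^{(d)}_{j+1}$, but the mathematical content is the same.
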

\begin{proof}
We can immediately rule out the uninteresting special case $k=0$, because any equation is satisfied modulo $p^0=1$, and the algorithm correctly returns $x=z$ in this case. So, let $k> 0$ and let us assume that $p$ is an odd prime; the special case $p=2$ will be discussed later. A solution $x\geq 0$ can always be expressed in the form $x=(p-1)y+n$ with $y,n\in \mathbb N$ and $n<p-1$, by Euclidean division. We will first prove that returning \texttt{None} in Line~22 is correct.

Since $p\nmid a$, by Fermat's Little Theorem we have $a^{p-1}\equiv 1\pmod p$. Thus,
$$a^x\equiv a^{(p-1)y+n}\equiv \left(a^{p-1}\right)^{y}a^{n}\equiv 1^{y}a^{n}\equiv a^{n}\pmod p$$
Since $a^z\equiv b\pmod p$ by assumption, and $x$ must satisfy $a^x\equiv a^n\equiv b\pmod p$, we must have
$$n\equiv z\pmod{\ord_p(a)}$$
Define $r=\min\{\nu_{p}(a^{p-1}-1),k\}$ and $h=\frac{a^{p-1}-1}{p^r}$, as in \cref{alg:1}. Due to Fermat's Little Theorem, and because $k\geq 1$, we have $r\geq 1$ as well. Also, by definition of $\nu_{p}$, $h$ is an integer. Since $a^{p-1}\equiv 1\pmod{p^r}$, we have $\ord_{p^r}(a)\mid p-1$, and therefore $\ord_{p^r}(a)\leq p-1$.

\cref{l:1} with $x:=a$ and $j:=1,2,\dots, r-1$ implies that $\ord_{p^r}(a)=\ord_p(a)p^i$ with $0\leq i<r$. Since $\ord_{p^r}(a)\leq p-1$, we must have $i=0$ and hence $\ord_{p^r}(a)=\ord_p(a)$. Therefore,
$$n\equiv z\pmod{\ord_{p^r}(a)}$$
and so $a^{n}\equiv a^z\pmod{p^r}$. Now, if $x$ is a solution to $a^x\equiv b\pmod{p^r}$, then necessarily
$$b\equiv a^x\equiv a^{(p-1)y+n}\equiv \left(a^{p-1}\right)^{y}a^{n}\equiv a^{n}\equiv a^z\pmod{p^r}$$
So, if $a^z\not\equiv b\pmod{p^r}$, then there is no solution to the equation modulo $p^r$, and therefore no solution modulo $p^k$, because $r\leq k$ by construction. We conclude that returning \texttt{None} in Line~22, after checking that $c_0\equiv a^z\not\equiv b\pmod{p^r}$ (Line~20) is correct, as there is no solution $x$ in this case.

We will now prove that, if the condition $a^{z}\equiv b\pmod{p^r}$ holds (and so the algorithm does not return \texttt{None}), then there is a unique integer $y$, with $0\leq y<p^{k-r}$, that satisfies the equation $a^{(p-1)y+z}\equiv b \pmod{p^k}$. Moreover, we will show that the $k-r$ digits of $y$ in base $p$ are correctly computed in Lines~25, 27, and~28 of the algorithm, and this will conclude the proof of correctness for any odd prime $p$.

We will prove our claim by induction. Assume that the first $j\geq 0$ digits of $y$ have been correctly computed, i.e., we have $y_j<p^j$ such that
$$c_j\equiv a^{(p-1)y_j+z}\equiv b\pmod{p^{r+j}}$$
There are $p$ possible choices for the next digit of $y$; choosing a particular digit $d$, with $0\leq d<p$, gives rise to $y^{(d)}_{j+1} = dp^j+y_j$ and
$$c^{(d)}_{j+1}=a^{(p-1)y^{(d)}_{j+1} + z} = a^{(p-1)\left( dp^j + y_j \right) + z}$$
Applying \cref{l:2} with $x:=a^{p-1}$ (recall that $r\geq 1$), we obtain
$$a^{(p-1)p^j}\equiv 1+hp^{r+j}\pmod{p^{r+j+1}}$$
Thus, we can compute $c^{(d+1)}_{j+1}-c^{(d)}_{j+1}$, for $0\leq d<p-1$, as follows:
\begin{align*}
c^{(d+1)}_{j+1}-c^{(d)}_{j+1}
&= a^{(p-1)\left( (d+1)p^j + y_j \right) + z}
 - a^{(p-1)\left( dp^j + y_j \right) + z} \\
&= a^{(p-1)\left( dp^j + y_j \right) + z}
        \left( a^{(p-1)p^j} - 1 \right) \\
&= \left( a^{dp^j} \right)^{p-1}
        a^{(p-1)y_j + z}
        \left( a^{(p-1)p^j} - 1 \right) \\
&\equiv (1+m_1p)(b+m_2p)hp^{r+j} \\
&\equiv h b p^{r + j} \pmod{p^{r + j + 1}}
\end{align*}
for some integers $m_1$ and $m_2$. In other words, incrementing the $(j+1)$th digit of $y$ by one unit has the effect of increasing the $(r+j+1)$th digit of $c$ by $hb$ modulo $p$ (as well as, possibly, some higher-order digits of $c$).

Now, in order to match the $(r+j+1)$th digit of $c$ with the $(r+j+1)$th digit of $b$, and considering that $c_{j+1}^{(0)}=c_j$, we have to solve the following equation for the unknown $d$:
$$dhb\equiv \frac{b-c_j}{p^{r+j}}\pmod p$$
We will argue that $p\nmid hb$. We already know that $p\nmid a$ and $a^z\equiv b\pmod p$, and thus $p\nmid b$. Moreover, recall that $r=\min\{\nu_{p}(a^{p-1}-1),k\}$. If $r=k$, we already have $c_0\equiv b\pmod{p^k}$ before entering the main loop, and hence the algorithm immediately terminates with $x=z$, which is correct. Otherwise, if $r<k$, we must have $r=\nu_{p}(a^{p-1}-1)$, and therefore $p\nmid h=\frac{a^{p-1}-1}{p^r}$, by definition of $\nu_p$. We have proved that $p\nmid b$ and $p\nmid h$, and hence $p\nmid hb$. So, $hb$ is invertible modulo $p$, and we can write
$$d\equiv (hb)^{-1}\: \frac{b-c_j}{p^{r+j}}\equiv (hb)^{p-2}\: \frac{b-c_j}{p^{r+j}}\pmod p$$
due to Fermat's Little Theorem. So, there is exactly one solution $d$ modulo $p$, and that has to be the next digit of $y$. Accordingly, Lines~25, 27, and 28 of the algorithm update $y$ in the same way. This concludes our proof by induction that the algorithm correctly computes all digits of $y$ until $c=b$.

We will now turn to the special case $p=2$. The proof of correctness is essentially the same, with a few minor differences. The main reason why $p=2$ is special is that the proof of \cref{l:2} requires that $p\geq 3$. For $p=2$, we have to use \cref{l:3} instead.

Note that in this case $z=0$, and therefore we have to construct the final solution $x$ from scratch. Let us write $x=2m+y_0$, with $y_0\in \{0,1\}$, and recall that $r=\min\{\nu_2\left(a^2-1\right),k\}$ and $h=\frac{a^2-1}{2^r}$. Thus, if $x$ is a solution, we must have
$$b\equiv a^x\equiv a^{2m+y_0}\equiv a^{y_0}\equiv c_0 \pmod{2^r}$$
Therefore, it is correct to consider the two options $y^{(0)}_0=0$ and $y^{(1)}_0=1$ and check if either satisfies the previous equation. In particular, we have to check whether $b\equiv a^0\equiv 1\pmod{2^r}$ or $b\equiv a^1\equiv a\pmod{2^r}$. This is what the algorithm does in Lines~20 and~21. If neither equation holds, it is thus correct to return \texttt{None} as in Line~22.

Otherwise, if the correct $y_0$ has been found, the algorithm proceeds with the main loop as usual. This time, in the $(j+1)$th iteration of the main loop, with $j\geq 0$, we compute the $(j+2)$th binary digit $d_{j+1}$ of $y$. Again, we will prove that there is only one possible solution, which is the one computed by the algorithm.

To set up a proof by induction, assume that
$$c_j\equiv a^{y_j}\equiv b\pmod{p^{r+j}}$$
for some $j\geq 0$. Let $d\in\{0,1\}$ be the next binary digit of $y$. Thus, we have the two possibilities $y^{(0)}_{j+1}=y_j$ and $y^{(0)}_{j+1}=2^{j+1}+y_j$, corresponding to $d=0$ and $d=1$, respectively. We can apply \cref{l:3} with $x:=a$, since $r\geq 1$ due to Fermat's Little Theorem, to obtain
\begin{align*}
c^{(1)}_{j+1}-c^{(0)}_{j+1}
&= a^{2^{j+1}+y_j}
 - a^{y_j} \\
&= a^{2^{j+1}}
        a^{y_j}
        \left( a^{2^{j+1}} - 1 \right) \\
&\equiv (1+2m_1)(b+2m_2)h2^{r+j} \\
&\equiv h b 2^{r + j} \\
&\equiv 2^{r + j} \pmod{2^{r + j + 1}}
\end{align*}
For some integers $m_1$ and $m_2$. The last equation follows from the fact that, as before, $2\nmid hb$. Thus, flipping the $(j+2)$th binary digit of $y$ flips the $(r+j+1)$th binary digit of $c$ (as well as, possibly, some higher-order digits of $c$). We conclude that there is a unique choice for $d$, which is precisely the one computed by the algorithm in Lines~25 and 27 (note that $g=1$, which is correct).
\end{proof}

\section{Optimizations and Performance Analysis}\label{s:4}
\mypar{Measuring complexity.} We will analyze the complexity of our algorithm in terms of the number of multiplications modulo $p^k$ it performs in the worst case, as a function of the parameters $p$ and $k$.

This is justified by the fact that additions and subtractions are relatively low-cost operations in standard multitape Turing machines, requiring only $O(n)$ steps, where $n$ is the number of digits of the operands.

By contrast, there is an abundance of multiplication algorithms, ranging from the schoolbook long multiplication algorithm in $O(n^2)$ steps to the Harvey--Hoeven algorithm in $O(n\log n)$ steps. Asymptotically faster multiplication algorithms usually introduce greater constant factors, making them advantageous only for computation with very large numbers. On the other hand, the modular exponentiation $a^j\modulo m$ is typically performed by repeated squaring, which takes roughly $2\lceil \log_2 j\rceil$ multiplications modulo $m$.

Moreover, we will assume that large integers are internally stored in base $p$ (or a power of $p$). This makes divisions and multiplications by powers of $p$ essentially free operations, which is particularly convenient for our algorithm. Indeed, the only integer division or modulo operation by a non-power of $p$ found in our algorithm is in Line~3 of \cref{alg:1}: a division by the relatively small number $p-1$, which is a non-essential normalization step and can optionally be removed without compromising the algorithm's correctness.

For the above reasons, we will measure the complexity of our algorithm by counting the number of multiplications modulo $p^k$ it performs in the worst case. As we will see, after several optimizations, this number will turn out to be $k(\lceil \log_2 p\rceil +2)+O(\log p)$.

\medskip
\mypar{Performance of Bach's algorithm.} We will describe the discrete logarithm lifting technique by Bach~\cite{Bach84} using the notation of \cref{s:2}. That is, we are given $a^z\equiv b\pmod p$ with $p\nmid a$ and seek $x$ such that $a^x\equiv b\pmod{p^k}.$ For $p\neq 2$, Bach's algorithm first computes $\theta(a)$ and $\theta(b)$, where
\begin{equation}\label{e:theta}
\theta(x)=\frac{x^{(p-1)p^{k-1}}-1}{p^k}\mod p^{k-1}
\end{equation}
Then the algorithm solves the linear equation $\theta(a)y\equiv \theta(b)\pmod{p^{k-1}}$ and, if $y$ exists, it returns
$$x=\left((z-y)p^{k-1}+y\right)\mod {p^{(p-1)p^{k-1}}}$$
The numerator of the fraction in \cref{e:theta} has to be computed modulo $p^{2k-1}$. Once $\theta(a)$ and $\theta(b)$ are known, the rest of the algorithm is relatively simple, and its running time negligible.

Thus, the bottleneck of Bach's algorithm is computing the function $\theta$ twice, which essentially involves two exponentiations modulo $p^{2k-1}$, each of which takes $$2\lceil \log_2 (p-1)p^{k-1}\rceil \approx 2k\log_2p$$
multiplications modulo $p^{2k-1}$. The complexity of Bach's algorithm is therefore roughly
\begin{equation}\label{e:bach}
2\cdot 2k\log_2p\cdot M\left(\log_pp^{2k-1}\right) = 4k\log_2p\cdot M(2k-1)
\end{equation}
where $M(n)$ is the running time of the chosen multiplication algorithm when applied to operands of $n$ digits in base $p$.

We remark that this is not only the worst-case running time, but the running time in all cases, because Bach's algorithm starts out by computing $\theta(a)$ and $\theta(b)$ and then decides whether the problem has a solution or not based on the linear equation in $y$, which involves $\theta(a)$ and $\theta(b)$.

\medskip
\mypar{Performance of \cref{alg:1}.} We now study the performance of our algorithm as defined in \cref{alg:1}. We will later develop some optimizations that further reduce its running time by a constant factor.

Up to the main loop of Line~26, the only non-negligible operations are the three exponentiations modulo $p^k$ (or modulo $p$) with exponent at most $p$ required to compute $c_0$, $e_0$, and $g$. Indeed, recall that if large numbers are represented in base $p$, then computing a power of $p$ is a free operation, and therefore computing $m$ and $v_0$ takes a negligible time. This whole part takes $O(\log p \cdot M(k))$.

As for the main loop, we do at most $k$ iterations, where the only operations that do not involve multiplications or divisions by powers of $p$ are those used to compute $c_j$, $d_j$, $e_j$, and $f_j$. However, $d_j$ can be computed via multiplications modulo $p$, which are negligible. On the other hand, computing $c_j$ only takes one multiplication modulo $p^k$, while $e_j$ and $f_j$ require a more detailed analysis.

Lines~33 and~34 perform exponentiations modulo $p^k$ with exponents bounded by $p$. Thus, they take at most $4\lceil \log_2p\rceil\cdot M(k)$. These two lines are executed at most $\lfloor k/2\rfloor$ times, because in the other cases there is an optimized version in Lines~29--31.

We did not justify this part in \cref{s:2}, so here is a discussion. Let us observe that the variable $r$ is incremented at each iteration in Line~38. So, we can define $r_0=r$ as computed in Lines~15--18, and $r_{j}=r_{j-1}+1$, which is computed at the $j$th iteration. Thus, $r_j=r+j$ for $j\geq 0$.

We know that $e_j\equiv a^{(p-1)p^j}\pmod{p^k}$. Moreover, in the proof of \cref{t:1} we determined that $e_j\equiv 1\pmod{p^{r+j}=p^{r_j}}$. Let us write $e_j=1+np^{r_j}$. Now, if $r_j\geq k/2$,
\begin{equation}\label{e:opt}
f_{j+1}\equiv e_j^d\equiv \left(1+np^{r_j}\right)^d\equiv \sum_{i=0}^d{d\choose i}\left(np^{r_j}\right)^i\equiv 1+dnp^{r_j}\equiv 1 + (e_j-1)d\pmod{p^k}
\end{equation}
Accordingly, the algorithm checks if $2r_j\geq k$ in Line~29, and if so it updates $f$ as above, which takes a single multiplication modulo $p^k$. As for $e$, we have $e_{j+1}\equiv 1+(e_j-1)p\pmod{p^k}$, which is a free operation in base $p$.

The last step is the computation of $x$ in Line~39, which involves one multiplication. Putting everything together, we conclude that the total running time of the algorithm in \cref{alg:1} is at most
$$\left(O(\log p) + k + 4\lceil \log_2 p\rceil\cdot k/2+k/2)\right)M(k)=\left(k(2\lceil \log_2 p\rceil + 3/2)+O(\log p)\right)M(k)$$
Of course, the running time can be much lower if the solution $x$ is small or if $x$ does not exist, because in these cases the algorithm is able to exit the main loop early or skip it entirely.

\medskip
\mypar{Further optimizations.} Although our algorithm in \cref{alg:1} is already an improvement on Bach's algorithm, we can further optimize it to cut its running time roughly in half.

As we already observed, the bottleneck of our algorithm is updating $f$ and $e$. We managed to simplify this part when $2r_j\geq k$ thanks to \cref{e:opt}. A similar idea can be applied when $3r_j\geq k$:
$$f_{j+1}\equiv \sum_{i=0}^d{d\choose i}\left(np^{r_j}\right)^i\equiv 1+dnp^{r_j}+{d\choose 2}\left(np^{r_j}\right)^2\equiv 1 + (e_j-1)\left(d +(e_j-1)\frac{d(d-1)}{2}\right) \pmod{p^k}$$
As $d$ is a single-digit number in base $p$, this operation involves essentially two multiplications modulo $p^k$. Again, $e_{j+1}$ can be computed by the same formula with $d=p$, which is a negligible operation because multiplications by $p$ are free.

This optimization is applied when $3r_j\geq k\geq 2r_j$, as shown in Lines~32--34 of \cref{alg:3}. Hence it involves at most $k/6$ iterations of the main loop.

In the $k/3$ (or fewer) cases where $3r_j<k$, neither of these optimizations apply. One could proceed in the same fashion and extract more terms from Newton's binomial formula; this is convenient when $k$ is very large. However, in the following we will focus on optimizing the computation of $f_{j+1}\equiv e_j^d\pmod{p^k}$ and $e_{j+1}\equiv e_j^p\pmod{p^k}$ in the $k/3$ cases when Newton's binomial formula is not applied.

We can exploit the fact that both exponentiations have the same base $e_j$. Thus, in the exponentiation by squaring algorithm we can save roughly $\left\lceil \log_2 p\right\rceil$ multiplications modulo $p^k$, as shown in \cref{alg:2}. This new function performs two modular exponentiations with the same base $a$, and it takes only $3\left\lceil \log_2 b_2\right\rceil$ multiplications modulo $m$, where $b_2$ is the larger of the two exponents.

Our implementation in \cref{alg:2} falls back to a pair of exponentiations when the exponents are 4-bit numbers (Lines~2--5), because the standard \texttt{pow} function performs better on small exponents.

This optimized dual exponentiation function is invoked in Line~36 of \cref{alg:3}, which replaces Lines~33 and~34 of \cref{alg:1}. This operation takes $3\left\lceil \log_2 p\right\rceil$ multiplications modulo $p^k$.

\begin{lstlisting}[style=mylistingstyle,caption={An optimized function that computes $a^{b_1}$ and $a^{b_2}$ modulo $m$, with $b_1\leq b_2$. The threshold of 15 was determined experimentally.\label{alg:2}},captionpos=t,float,belowcaptionskip=\bigskipamount,mathescape=true]
def pow_dual(a, b1, b2, m, threshold=15):
    if b2 <= threshold:
        r1 = pow(a, b1, m)
        r2 = pow(a, b2 - b1, m)
        return r1, (r1 * r2) % m
    r1 = 1
    r2 = 1
    while b2 > 0:
        if b1 % 2 == 1:
            r1 = (r1 * a) % m
        if b2 % 2 == 1:
            r2 = (r2 * a) % m
        a = (a * a) % m
        b1 = b1 // 2
        b2 = b2 // 2
    return r1, r2
\end{lstlisting}

\begin{lstlisting}[style=mylistingstyle,caption={Optimized main loop. This block should replace Lines~26--39 of \cref{alg:1}.\label{alg:3}},captionpos=t,float,belowcaptionskip=\bigskipamount,mathescape=true,firstnumber=26]
    while b != c:
        d = (g * ((b - c) // v)) % p
        y = y + d * u
        if r + r >= k:
            f = (1 + (e - 1) * d) % m
            e = (1 + (e - 1) * p) % m
        elif r + r + r >= k:
            f = (1 + (e - 1) * (d + (e - 1) * (d * (d - 1) // 2))) % m
            e = (1 + (e - 1) * (p + (e - 1) * (p * (p - 1) // 2))) % m
        else:
            f, e = pow_dual(e, d, p, m)
        c = (c * f) % m
        u = u * p
        v = v * p
        r = r + 1
    return (p - 1) * y + z
\end{lstlisting}

\medskip
\mypar{Optimized performance.} Summarizing our analysis, we can now compute the final running time of our algorithm with the optimized main loop from \cref{alg:3}:
\begin{equation}\label{e:new2}
\left(O(\log p) + k + 3\lceil \log_2 p\rceil\cdot k/3+k/2 + 2k/6)\right)M(k)\leq \left(k(\lceil \log_2 p\rceil + 2)+O(\log p)\right)M(k)
\end{equation}
Comparing the leading term $k M(k)\log_2p$ of \cref{e:new2} with the leading term $4kM(2k-1)\log_2p$ of Bach's algorithm in \cref{e:bach}, we see that Bach's algorithm takes roughly
$$\frac{4M(2k-1)}{M(k)}$$
times longer than our optimized algorithm. Since $M(n)$ is a super-linear function (the best-known running time is $M(n)=O(n\log n)$), Bach's algorithm should take at least 8 times longer than ours to lift discrete logarithms modulo large prime powers.

In fact, extensive experimental evaluations with $p<10^3$ and $k<10^4$ have confirmed that our algorithm is at least 8 times faster than Bach's, and often much faster when $x$ is small or when no solution exists, as the main loop exits early in these cases. For larger inputs, our algorithm is expected to perform even better compared to Bach's, due to the non-linearity of $M(n)$.

\section{Conclusion}\label{s:5}
We have proposed an efficient discrete logarithm lifting algorithm, i.e., an algorithm for solving the modular equation $a^x\equiv b\pmod{p^k}$ when an integer $z$ such that $a^z\equiv b\pmod p$ is given, and $a$ is not a multiple of the prime $p$. Of course the equation has either no solutions or infinitely many solutions, since $x$ being a solution implies that $x+n\ord_{p^k}(a)$ is also a solution for any $n\in\mathbb Z$.

When a solution exists, our algorithm expresses it as $x=(p-1)y+z$, where $y\geq 0$ is smallest possible. This number $x$ is guaranteed to be the minimum non-negative solution to the modular equation, provided that $\ord_p(a)=p-1$. Otherwise, if $\ord_p(a)$ is known, we can use it in lieu of $p-1$ in Lines~3, 13, and~39 of \cref{alg:1} and in Line~41 of \cref{alg:3}. As a result, the returned value $x$ satisfies $0\leq x<\ord_{p^k}(a)$, and is therefore the minimum non-negative solution (this easily follows from our analysis of the algorithm in \cref{t:1}).

If large integers are represented modulo $p$ (or powers of $p$) our algorithm only requires
$$k(\lceil \log_2 p\rceil +2)+O(\log p)$$
multiplications modulo $p^k$ in the worst case (typically much fewer operations are needed, especially when the solution $x$ is small or does not exist). This is an improvement by at least a factor of 8 on a previous algorithm by Bach~\cite{Bach84}.

The bottleneck of our algorithm is, by far, the call to \verb|pow_dual| in Line~36 of \cref{alg:3}. Future optimization efforts should focus on improving this step.

\subsection*{Acknowledgments}
The authors would like to thank Kouichi Sakurai and Francesco Veneziano for helpful discussions and suggestions.

\bibliographystyle{plainurl}
\bibliography{bibliography}
\end{document}